\newtheorem{definition}{Definition}[section]
\newtheorem{theorem}[definition]{Theorem}
\newtheorem{lemma}[definition]{Lemma}
\newtheorem{example}[definition]{Example}
\newtheorem{problem}[definition]{Problem}
\begin{document}
\title{\bf  
A poset $\Phi_n$ whose maximal chains are 
in bijection with the $n \times n$ alternating sign matrices
}
\author{
Paul Terwilliger 
}
\date{}

\maketitle
\begin{abstract}
For an integer $n\geq 1$, we display a poset $\Phi_n$
whose maximal chains are in bijection with the $n\times n$
alternating sign matrices. The Hasse diagram $\widehat \Phi_n$ 
is obtained from the $n$-cube by adding some edges.
We show that the dihedral
group $D_{2n}$ acts on $\widehat \Phi_n$
as a group of automorphisms.

\bigskip
\noindent
{\bf Keywords}. Alternating sign matrix,
maximal chain, dihedral group.
\hfil\break
\noindent {\bf 2010 Mathematics Subject Classification}. 
Primary: 05B20.
Secondary: 05E18, 15B35,  15B36.

 \end{abstract}
\section{Introduction}

\noindent  We will be discussing a type of square matrix called
an alternating sign matrix. These matrices were introduced in
\cite{mrr}, and subsequently linked
to many other topics in Combinatorics;
see \cite{bressoud} for an overview.
In 
\cite{lasc}
 the alternating sign matrices are linked to
partially ordered sets in the following way.
For $n\geq 1$ the set of $n \times n $ alternating sign matrices becomes
a distributive lattice,
which is the MacNeille completion of
the Bruhat order on the symmetric group $S_n$.
For more discussion of this see 
\cite{bru}, 
\cite[p.~598]{striker}, 
\cite[p.~2]{striker2}.
In the present paper we link alternating sign matrices to
partially ordered sets in a different way. 
For $n\geq 1$ we display a partially ordered set $\Phi_n$, whose maximal
chains are in bijection with the $n \times n$ alternating
sign matrices. We will also discuss the symmetries of $\Phi_n$.
Before describing our results in more detail, we recall a few terms.
\medskip

\noindent 
Consider a finite poset, with vertex set
$X$ and partial order $\leq $.
For vertices $x, y$ write $x<y$ whenever
$x\leq y$ and $x \not=y$.
A vertex $x$ is {\it maximal} (resp. {\it minimal})
whenever there does not exist a vertex $y$ such that $x < y$ (resp. $y < x$).
For vertices $x,y$ we say that $y$ {\it covers} $x$ whenever
$x < y$ and there does not exist a vertex $z$ such that
$x < z < y$.
For an integer $r\geq 0$, a {\it chain of length $r$} 
is a sequence of vertices $x_0 < x_1< \cdots < x_r$.
This chain is called {\it maximal} whenever (i) $x_r$ is maximal;
(ii) $x_0$ is minimal; (iii) 
$x_i$ covers $x_{i-1}$ for $1 \leq i \leq r$.
The 
{\it Hasse diagram} $\widehat X$ is an undirected graph
with vertex set $X$;
vertices $x,y$ are adjacent in $\widehat X$  whenever one of
$x,y$ covers the other one. 
\medskip

\noindent
For the rest of this paper, fix an integer $n\geq 1$. 
An $n \times n$ matrix is said to
have order $n$.
\medskip

\noindent 
We recall the $n$-cube.
This is an undirected graph, whose 
 vertex set consists of the sequences $(a_1, a_2, \ldots, a_n)$
such that $a_i \in \lbrace 0,1\rbrace$ for $1 \leq i \leq n$.
Vertices $x,y$ of the $n$-cube are adjacent whenever they
differ in exactly one coordinate. The $n$-cube is often called
a {\it hypercube}, or a {\it binary Hamming graph}.
\medskip

\noindent
We now describe our results in more detail.
Consider the poset whose vertex set
consists of the subsets of  
$\lbrace 1,2,\ldots, n\rbrace$; the partial order is $\subseteq $.
For this poset the Hasse diagram is isomorphic to
the $n$-cube. For this poset,
 the maximal
chains are in bijection with the 
permutation matrices of order $n$ \cite[p~142]{bruIntro}. 
For this poset, we
 augment the partial order 
by adding some edges to the Hasse diagram; the resulting
poset is denoted 
 $\Phi_n$.
We show that
the maximal chains in $\Phi_n$ are in bijection with the
alternating sign matrices of order $n$.
We show that the dihedral
group $D_{2n}$ acts on $\widehat \Phi_n$
as a group of automorphisms.

\section{Alternating sign matrices}

In this section we give some background concerning alternating
sign matrices.

\begin{definition}
\label{def:con}
\rm A sequence $(\sigma_0, \sigma_1, \ldots, \sigma_n)$ is
{\it constrained} whenever
\begin{enumerate}
\item[\rm (i)] $\sigma_i \in \lbrace  0,1\rbrace$ for $0 \leq i \leq n$;
\item[\rm (ii)] $\sigma_0=0$ and $\sigma_n=1$.
\end{enumerate}
Let ${\rm Con}_n$ denote the set of constrained sequences.
\end{definition}

\noindent  Given a constrained sequence
$(\sigma_0, \sigma_1, \ldots, 
\sigma_n)$ define
$\alpha_i= \sigma_i - \sigma_{i-1}$ for $1 \leq i \leq n$.
For example, if
$(\sigma_0, \sigma_1, \ldots,  \sigma_n)=
 (0,0,1,1,1,0,1,1,0,0,1)$ then the sequence
$(\alpha_1, \alpha_2, \ldots, \alpha_n)$ is
\begin{equation*}
(0,1,0,0,-1,1,0,-1,0,1).
\end{equation*}
To describe sequences of this sort, we make a definition.

\begin{definition}
\label{def:alt}
\rm
A sequence
$(\alpha_1, \alpha_2, \ldots, \alpha_n)$
is {\it alternating} whenever 
\begin{enumerate}
\item[\rm (i)]
$\alpha_i \in \lbrace 1,0,-1\rbrace$ for $1 \leq i \leq n$;
\item[\rm (ii)] the nonzero coordinates among $(\alpha_1, \alpha_2,
\ldots, \alpha_n)$
form the pattern
$1,-1,1,-1,\ldots, -1, 1$. 
\end{enumerate}
Let ${\rm Alt}_n$ denote the set of alternating sequences.
\end{definition}

\begin{example}\rm For $n=4$ the alternating sequences are
\begin{eqnarray*}
&&(1,0,0,0),          \qquad \qquad             (0,1,-1,1), \\
&&(0,1,0,0),             \qquad \qquad         (1,0,-1,1), \\ 
&&(0,0,1,0),    \qquad \qquad (1,-1,0,1),\\
&&(0,0,0,1), \qquad \qquad (1,-1,1,0).
\end{eqnarray*}
\end{example}

\begin{lemma} 
\label{lem:atos}
We give a bijection
${\rm Con}_n \to 
 {\rm Alt}_n$.
For a constrained sequence
$(\sigma_0, \sigma_1, \ldots, \sigma_n)$
the corresponding alternating sequence
 $(\alpha_1, \alpha_2, \ldots, \alpha_n)$ satisfies
$\alpha_i = \sigma_i - \sigma_{i-1}$ for $1 \leq i \leq n$.
The inverse bijection ${\rm Alt}_n \to 
 {\rm Con}_n$ is described as follows.
For an alternating sequence $(\alpha_1, \alpha_2, \ldots, \alpha_n)$
the corresponding constrained sequence 
$(\sigma_0, \sigma_1, \ldots, \sigma_n)$ satisfies
$\sigma_i = \alpha_1+\alpha_2+\cdots + \alpha_i$ for $0 \leq i \leq n$.
\end{lemma}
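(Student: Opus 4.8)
The plan is to verify that the two displayed maps are well defined (each lands in the asserted target set) and that they are mutually inverse; bijectivity then follows. The mutual-inverse check is a routine telescoping computation, so the real content lies in the two well-definedness claims, and the harder of these is the reverse one.

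First I would check that the forward map sends ${\rm Con}_n$ into ${\rm Alt}_n$. Given a constrained sequence $(\sigma_0, \ldots, \sigma_n)$, each difference $\alpha_i = \sigma_i - \sigma_{i-1}$ with $\sigma_i, \sigma_{i-1} \in \lbrace 0,1 \rbrace$ clearly lies in $\lbrace 1,0,-1 \rbrace$, giving condition (i) of Definition \ref{def:alt}. For condition (ii), I would observe that $\alpha_i \not= 0$ precisely when $\sigma$ changes value at step $i$, with $\alpha_i = 1$ at an upward change $0 \to 1$ and $\alpha_i = -1$ at a downward change $1 \to 0$. Reading $\sigma$ from left to right, consecutive changes cannot both be upward or both downward, so the nonzero differences alternate in sign. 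Because $\sigma_0 = 0$ the first nonzero difference is $+1$, and because $\sigma_n = 1$ the value after the last nonzero difference is $1$, so the last nonzero difference is also $+1$. Hence the nonzero $\alpha_i$ form the required pattern $1,-1,1,\ldots,-1,1$.

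The main obstacle is the reverse well-definedness: showing that for an alternating sequence $(\alpha_1, \ldots, \alpha_n)$ the partial sums $\sigma_i = \alpha_1 + \cdots + \alpha_i$ satisfy $\sigma_i \in \lbrace 0,1 \rbrace$. This is a ballot-type argument. I would note that $\sigma_i$ equals the number of $+1$'s among $\alpha_1, \ldots, \alpha_i$ minus the number of $-1$'s among them. Since the nonzero coordinates form the pattern $1,-1,1,\ldots,-1,1$, any prefix of the nonzero entries reads $+1,-1,+1,-1,\ldots$, so within each prefix the count of $-1$'s never exceeds the count of $+1$'s and the two counts differ by at most one; hence $\sigma_i \in \lbrace 0,1 \rbrace$, which is condition (i) of Definition \ref{def:con}. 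For condition (ii), $\sigma_0 = 0$ is the empty sum, and $\sigma_n$ is the full sum of an alternating sequence; since the pattern begins and ends with $+1$ it has exactly one more $+1$ than $-1$, so $\sigma_n = 1$.

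Finally I would confirm that the two maps are mutually inverse. Starting from $\sigma \in {\rm Con}_n$ and forming $\alpha_i = \sigma_i - \sigma_{i-1}$, the partial sum telescopes as $\sum_{j=1}^{i} \alpha_j = \sigma_i - \sigma_0 = \sigma_i$, recovering $\sigma$ together with $\sigma_0 = 0$. Starting from $\alpha \in {\rm Alt}_n$ and forming $\sigma_i = \sum_{j=1}^{i} \alpha_j$, the difference $\sigma_i - \sigma_{i-1} = \alpha_i$ recovers $\alpha$. Thus each map is the inverse of the other, so both are bijections, as claimed.
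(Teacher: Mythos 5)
Your proof is correct and follows the only natural route — direct verification from Definitions \ref{def:con} and \ref{def:alt} that both maps are well defined and mutually inverse — which is exactly what the paper's one-line proof ("Use Definitions \ref{def:con}, \ref{def:alt}") leaves to the reader. You have simply supplied the details the paper omits, including the ballot-type check that the partial sums stay in $\lbrace 0,1\rbrace$.
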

\begin{proof} Use Definitions
\ref{def:con},
\ref{def:alt}.
\end{proof}

\begin{definition}
\rm
(See 
\cite{mrr}.)
An {\it alternating sign matrix} (or {\it ASM})
is a square matrix such that each row and column is
alternating.
\end{definition}

\begin{example} Any permutation matrix is an ASM.
\end{example}

\section{The poset $\Phi_n$}

In this section we define the poset $\Phi_n$, and explain
how it is related to alternating sign matrices.

\begin{definition}
\label{def:Pn}
\rm 
We define a poset $\Phi_n$ as follows.
The vertex set consists of the sequences
$(a_1, a_2, \ldots, a_n)$ such
that $a_i \in \lbrace 0,1\rbrace$ for $1 \leq i \leq n$.
By the {\it rank}
of a vertex we mean the number of nonzero 
coordinates.
The partial order $\leq $ is defined as follows.
For vertices $x,y$ let $y$ cover $x$ with respect to $\leq$ 
whenever $y-x$ is alternating; in this case the rank of $y$
is one more than the rank of $x$.
\end{definition}

\noindent We comment on the poset $\Phi_n$.
The Hasse diagram $\widehat \Phi_n$ is obtained from
the $n$-cube by adding some edges.
The poset $\Phi_n$ has a unique mininimal element 
${\bf 0}=(0,0,\ldots, 0)$ and
a unique maximal element
${\bf 1}=(1,1,\ldots, 1)$.
A chain in $\Phi_n$ is maximal if and only if it  has length $n$.
For a maximal chain $x_0<x_1<\cdots<x_n$
the vertex $x_i$ has rank $i$ for $0 \leq i \leq n$.
In particular $x_0=\bf 0$ and $x_n=\bf 1$.

\begin{theorem}
\label{prop:asmbij}
The alternating sign matrices of order $n$ are in bijection with
the maximal chains of the poset $\Phi_n$.
A bijection is described as follows.
Given a maximal chain 
$x_0<x_1<\cdots< x_n$ in $\Phi_n$, the corresponding ASM has
row $i$ equal to $x_{i}-x_{i-1}$ $(1 \leq i \leq n)$.
The inverse bijection is described as follows.
Given an ASM of order $n$, let
$x_i$ denote the sum of rows $1,2,\ldots, i$ $(0 \leq i \leq n)$. 
 Then $x_0 < x_1 < \cdots < x_n$ is the corresponding
 maximal chain in $\Phi_n$.
\end{theorem}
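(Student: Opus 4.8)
The plan is to verify that the two described maps are well defined and mutually inverse, with all the real content coming from the row/column bijection of Lemma~\ref{lem:atos} applied one column at a time. The central observation is this: for a maximal chain $x_0 < x_1 < \cdots < x_n$, fix a column index $j$ and read off the $j$-th coordinates to get the sequence $\sigma_i = (x_i)_j$ for $0 \le i \le n$. Since $x_0 = \mathbf{0}$ and $x_n = \mathbf{1}$, this sequence lies in ${\rm Con}_n$, and its successive differences $\sigma_i - \sigma_{i-1}$ are exactly the entries of column $j$ of the associated matrix. Thus the row and column structures of the matrix are governed by the same two-sided bijection, read in the two perpendicular directions.

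First I would check that the forward map sends a maximal chain to an ASM. By Definition~\ref{def:Pn}, $x_i$ covering $x_{i-1}$ means $x_i - x_{i-1}$ is alternating, so each \emph{row} of the matrix is alternating by construction. For the \emph{columns} I would invoke the observation above: for each $j$, column $j$ equals $(\sigma_i - \sigma_{i-1})_{i=1}^{n}$ where $(\sigma_i)$ is a constrained sequence, and Lemma~\ref{lem:atos} says this difference sequence is alternating. Hence every row and every column is alternating, so the matrix is an ASM.

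Next I would check that the inverse map sends an ASM to a maximal chain. Given an ASM with rows $r_1, \ldots, r_n$, set $x_0 = \mathbf{0}$ and $x_i = r_1 + \cdots + r_i$. The essential point is that each $x_i$ is a genuine vertex, i.e.\ lies in $\{0,1\}^n$: its $j$-th coordinate is the partial sum $\alpha_1 + \cdots + \alpha_i$ of the alternating column $j$, which by the inverse bijection of Lemma~\ref{lem:atos} is a constrained sequence and so takes values in $\{0,1\}$. The same lemma gives $(x_0)_j = 0$ and $(x_n)_j = 1$ for all $j$, whence $x_0 = \mathbf{0}$ and $x_n = \mathbf{1}$. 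Since $x_i - x_{i-1} = r_i$ is a row of the ASM, hence alternating, Definition~\ref{def:Pn} shows $x_i$ covers $x_{i-1}$; as the resulting chain has length $n$ it is maximal.

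Finally, mutual inverseness is a telescoping computation: starting from a chain, forming row differences and then partial sums returns $\sum_{k=1}^{i}(x_k - x_{k-1}) = x_i - x_0 = x_i$, and the analogous cancellation holds in the other order. I expect the main obstacle to be organizational rather than mathematical, namely stating cleanly that the column constraint on the matrix is nothing but Lemma~\ref{lem:atos} read along chains of coordinates, so that the single lemma simultaneously controls well-definedness in both directions. Once that identification is made explicit, each of the three verifications is immediate.
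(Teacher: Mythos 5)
Your proposal is correct and follows essentially the same route as the paper's proof: rows are alternating by the covering relation, columns are handled by reading off the $j$-th coordinates as a constrained sequence and invoking Lemma~\ref{lem:atos} in both directions, and the two maps are inverse by telescoping. The only difference is cosmetic --- you make the telescoping cancellation explicit where the paper simply asserts that the two functions are inverses.
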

\begin{proof} 
Let ${\rm Chain}_n$ denote the 
set of maximal chains in $\Phi_n$.
Let ${\rm ASM}_n$ denote the set of 
alternating sign matrices of order $n$.
We give a function ${\rm Chain}_n \to {\rm ASM}_n$.
Let 
$x_0<x_1<\cdots< x_n$  denote a maximal chain in $\Phi_n$.
For $1 \leq i \leq n$ the vertex 
$x_i$ covers $x_{i-1}$, so
$x_i-x_{i-1}$ is alternating. Define an $n \times n $ matrix
$A$ with row $i$ equal to $x_i -x_{i-1}$ for $1 \leq i \leq n$.
We show that $A \in {\rm ASM}_n$. By construction each row of $A$
is alternating. We now show that each column of $A$ is alternating.
For $1 \leq j \leq n$ consider column $j$ of $A$.
For this column let $\alpha_i$ denote coordinate $i$ $(1 \leq i \leq n)$.
 For $0 \leq i \leq n$ let $\sigma_i$ denote coordinate $j$
of $x_i$. By construction
$\sigma_i \in \lbrace 0,1\rbrace$ for $0 \leq i \leq n$.
We have $\sigma_0=0$ since $x_0={\bf 0}$, 
and $\sigma_n=1$ since
$x_n={\bf 1}$. By these comments the sequence
$(\sigma_0, \sigma_1,\ldots, \sigma_n)$ is constrained.
By construction $\alpha_i= \sigma_i-\sigma_{i-1}$ for $1 \leq i \leq n$.
By this and
Lemma \ref{lem:atos} the
sequence $(\alpha_1, \alpha_2, \ldots, \alpha_n)$ is alternating.
We have shown that each column of $A$ is alternating.
By the above comments $A\in {\rm ASM}_n$.
We just gave a function ${\rm Chain}_n \to {\rm ASM}_n$.
Next we give a function ${\rm ASM}_n\to {\rm Chain}_n$.
Let $A$ denote an ASM of order $n$.
For $0 \leq i \leq n$
let $x_i$ denote the sum of its rows $1,2,\ldots, i$.
By Lemma \ref{lem:atos} each coordinate of $x_i$ is $0$ or $1$.
So $x_i$ is a vertex of $\Phi_n$.
By construction $x_0=\bf 0$ and $x_n=\bf 1$.
For $1 \leq i \leq n$,  $x_i-x_{i-1}$ is equal to
row $i$ of $A$, and
is therefore alternating. So $x_i$ covers $x_{i-1}$.
By these comments $x_0< x_1< \cdots < x_n$ is a maximal
chain in $\Phi_n$.
We have given a function ${\rm ASM}_n \to {\rm Chain}_n$.
Consider the above functions ${\rm Chain}_n\to {\rm ASM}_n$ 
and ${\rm ASM}_n\to {\rm Chain}_n$.
These functions are inverses, and hence bijections.
\end{proof}

\section{The graph $\widehat \Phi_n$}
In the previous section we showed how
the 
poset $\Phi_n$ is related to the alternating sign matrices of
order $n$. As we investigate $\Phi_n$, it is sometimes 
convenient to work with the 
Hasse diagram $\widehat \Phi_n$.
Our next goal is to show 
that the dihedral group $D_{2n}$
acts on the graph $\widehat \Phi_n$ as a group of automorphisms.
\begin{definition}
\label{def:theta}
\rm
Define a map $\rho$ on $\Phi_n$
that sends
each vertex 
$(a_1, a_2, \ldots, a_n)\mapsto 
(a_n,a_1, a_2, \ldots, a_{n-1})$.
The map $\rho$ has order $n$.
Define a map $\xi $ on $\Phi_n$ that sends
each vertex 
$(a_1, a_2, \ldots, a_n)\mapsto 
(\overline {a_1}, a_2, \ldots, a_n)$,
where $\overline x$ means $1-x$.
The map $\xi$ has order 2.
Define the composition
$\theta= \xi \circ \rho$.
\end{definition}

\begin{lemma} 
\label{lem:theta}
The map $\theta$ sends each vertex
$(a_1, a_2, \ldots, a_n)\mapsto 
(\overline {a_n}, a_1, \ldots, a_{n-1})$.
Moreover $\theta^{-1}$ sends
$(a_1, a_2, \ldots, a_n)\mapsto 
(a_2, a_3, \ldots, a_n, \overline {a_1})$.
\end{lemma}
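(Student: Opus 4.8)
The plan is to establish both claims by direct computation, working from Definition \ref{def:theta} and tracking the coordinates through each map. For the first claim, I would apply the maps in the order dictated by the composition $\theta = \xi \circ \rho$: that is, $\rho$ first and then $\xi$. Starting from a vertex $(a_1, a_2, \ldots, a_n)$, applying $\rho$ yields $(a_n, a_1, a_2, \ldots, a_{n-1})$ by definition. Then $\xi$ complements only the first coordinate, leaving the rest fixed, so the first coordinate $a_n$ becomes $\overline{a_n}$ and we obtain $(\overline{a_n}, a_1, a_2, \ldots, a_{n-1})$. This is precisely the asserted formula for $\theta$.

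For the second claim, I would compute $\theta^{-1}$ from the factorization $\theta = \xi \circ \rho$, which gives $\theta^{-1} = \rho^{-1} \circ \xi^{-1}$. The key observations are that $\xi$ has order $2$, so $\xi^{-1} = \xi$, and that $\rho$ is the right cyclic shift, so its inverse $\rho^{-1}$ is the left cyclic shift sending $(a_1, a_2, \ldots, a_n) \mapsto (a_2, a_3, \ldots, a_n, a_1)$. Hence $\theta^{-1} = \rho^{-1} \circ \xi$, meaning $\xi$ is applied first and $\rho^{-1}$ second. Starting from $(a_1, a_2, \ldots, a_n)$, applying $\xi$ gives $(\overline{a_1}, a_2, \ldots, a_n)$, and then the left shift $\rho^{-1}$ moves the first coordinate to the end, producing $(a_2, a_3, \ldots, a_n, \overline{a_1})$, as claimed. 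Alternatively, one could verify the formula by composing the proposed map for $\theta^{-1}$ with the established formula for $\theta$ and checking that the result is the identity; I would mention this as a consistency check.

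The computation is entirely routine, so there is no genuine obstacle; the only point requiring care is the bookkeeping of composition order. Specifically, one must respect the convention that $\xi \circ \rho$ applies $\rho$ first, and correspondingly that $\theta^{-1} = \rho^{-1} \circ \xi$ applies $\xi$ first --- reversing these orders would yield an incorrect formula. Keeping the order straight, together with the elementary facts $\xi^{-1} = \xi$ and that $\rho^{-1}$ is the left shift, makes both verifications immediate.
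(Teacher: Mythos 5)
Your proof is correct and takes the same approach as the paper, which simply says ``Use Definition \ref{def:theta}''; you have spelled out the routine computation that the paper leaves implicit, including the correct handling of composition order and the identity $\theta^{-1}=\rho^{-1}\circ\xi$.
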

\begin{proof} Use Definition
\ref{def:theta}.
\end{proof}

\begin{example}\rm
For $n=2$ the map $\theta$ sends
\begin{align*}
& 
(0,0)\mapsto 
(1,0)\mapsto 
(1,1)\mapsto 
(0,1)\mapsto 
(0,0).
\end{align*}
\end{example}

\begin{example} 
For $n=3$ the map $\theta$ sends
\begin{align*}
& 
(0,0,0)\mapsto 
(1,0,0)\mapsto 
(1,1,0)\mapsto 
(1,1,1)\mapsto
(0,1,1)\mapsto 
(0,0,1) \mapsto 
(0,0,0)
\\
& (0,1,0)\mapsto (1,0,1)\mapsto (0,1,0).
\end{align*}
\end{example}

\begin{example}\rm For $n=4$ the map $\theta$ sends
\begin{align*}
(0,0,0,0)\mapsto 
(1,0,0,0)&\mapsto 
(1,1,0,0)\mapsto 
(1,1,1,0)\mapsto 
(1,1,1,1)
\mapsto (0,1,1,1)
\\
&
\mapsto 
(0,0,1,1)\mapsto 
(0,0,0,1)\mapsto 
(0,0,0,0)
\\
(1,0,0,1)\mapsto 
(0,1,0,0)&\mapsto 
(1,0,1,0)\mapsto 
(1,1,0,1)\mapsto 
(0,1,1,0)
\mapsto 
(1,0,1,1)
\\
&
\mapsto 
(0,1,0,1)\mapsto 
(0,0,1,0)\mapsto 
(1,0,0,1).
\end{align*}
\end{example}

\begin{example}\rm
For $n=5$ the map $\theta$ sends
\begin{align*}
(0,0,0,0,0) &\mapsto 
(1,0,0,0,0)\mapsto 
(1,1,0,0,0)\mapsto 
(1,1,1,0,0)\mapsto 
(1,1,1,1,0) \mapsto
(1,1,1,1,1) 
\\
&
\mapsto 
(0,1,1,1,1) \mapsto
(0,0,1,1,1) \mapsto
(0,0,0,1,1) \mapsto
(0,0,0,0,1) \mapsto
(0,0,0,0,0)
\\
(1,0,0,0,1) &\mapsto
(0,1,0,0,0) \mapsto
(1,0,1,0,0) \mapsto
(1,1,0,1,0) \mapsto
(1,1,1,0,1) \mapsto
(0,1,1,1,0)
\\
&\mapsto
(1,0,1,1,1) \mapsto
(0,1,0,1,1) \mapsto
(0,0,1,0,1) \mapsto
(0,0,0,1,0) \mapsto
(1,0,0,0,1)
\\
(0,0,1,0,0) &\mapsto 
(1,0,0,1,0) \mapsto 
(1,1,0,0,1) \mapsto 
(0,1,1,0,0) \mapsto 
(1,0,1,1,0) \mapsto 
(1,1,0,1,1)
\\
&\mapsto 
(0,1,1,0,1) \mapsto 
(0,0,1,1,0) \mapsto 
(1,0,0,1,1) \mapsto 
(0,1,0,0,1) \mapsto 
(0,0,1,0,0)
\\
(1,0,1,0,1) &\mapsto (0,1,0,1,0) \mapsto (1,0,1,0,1).
\end{align*}
\end{example}

\begin{lemma}
The map $\theta^n$ sends each vertex
$(a_1, a_2, \ldots, a_n)\mapsto 
(\overline {a_1}, \overline {a_2}, \ldots, \overline {a_n})$.
Moreover $\theta$
has order $2n$.
\end{lemma}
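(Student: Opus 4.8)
The plan is to first obtain a closed form for every power $\theta^i$ with $0\le i\le n$, and then read off both assertions from it. Starting from the description in Lemma \ref{lem:theta}, namely that $\theta$ sends $(a_1,\ldots,a_n)\mapsto(\overline{a_n},a_1,\ldots,a_{n-1})$, I would prove by induction on $i$ that
\[
\theta^i(a_1,a_2,\ldots,a_n)=(\overline{a_{n-i+1}},\ldots,\overline{a_n},a_1,\ldots,a_{n-i})\qquad(0\le i\le n).
\]
Intuitively $\theta$ rotates the coordinates one step to the right while complementing whichever coordinate wraps from position $n$ around to position $1$; after $i$ steps the last $i$ coordinates have each wrapped once and so been complemented once. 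The base case $i=0$ is the identity. For the inductive step (for $0\le i\le n-1$), the last coordinate of $\theta^i(a_1,\ldots,a_n)$ is $a_{n-i}$, so applying Lemma \ref{lem:theta} once more moves $\overline{a_{n-i}}$ to the front, which is exactly the formula at $i+1$. Setting $i=n$ collapses the second block and yields $\theta^n(a_1,\ldots,a_n)=(\overline{a_1},\ldots,\overline{a_n})$, the first assertion.

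For the order I would first establish the upper bound. Writing $C$ for the complementation map $\theta^n$, the identity $\overline{\overline{x}}=x$ gives $C^2=\mathrm{id}$, so $\theta^{2n}=(\theta^n)^2=\mathrm{id}$ and the order of $\theta$ divides $2n$. For the matching lower bound I would test the powers on the all-zeros vertex $\mathbf 0$: the displayed formula gives $\theta^i(\mathbf 0)=(\underbrace{1,\ldots,1}_{i},\underbrace{0,\ldots,0}_{n-i})$ for $0\le i\le n$, so for each $i$ with $1\le i\le n$ this vertex carries at least one $1$ and therefore differs from $\mathbf 0$. Hence $\theta^i\neq\mathrm{id}$ for every $i$ in the range $1\le i\le n$, and in particular the order of $\theta$ exceeds $n$.

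Finally I would combine the two bounds. Since $2\mid 2n$, any proper divisor $d$ of $2n$ satisfies $d=2n/(2n/d)\le 2n/2=n$, so the largest proper divisor of $2n$ equals $n$. As the order of $\theta$ divides $2n$ yet is strictly greater than $n$, it can only be $2n$. The one point requiring care is precisely this last step that pins the order down exactly: the single relation $\theta^n\neq\mathrm{id}$ rules out only those divisors that divide $n$, which in general does not force the order to be $2n$, so it is essential that the orbit of $\mathbf 0$ furnishes $\theta^i\neq\mathrm{id}$ for all $i\le n$ rather than merely for $i=n$.
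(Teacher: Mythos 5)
Your proof is correct and follows the same route the paper intends: the paper's proof is simply ``Use Lemma \ref{lem:theta},'' and your induction giving the closed form for $\theta^i$, together with the orbit of $\mathbf 0$ to rule out every order $\le n$, is exactly the careful write-up of that one-line argument. Your closing remark --- that $\theta^n\neq\mathrm{id}$ alone would not pin the order down to $2n$ --- is a genuine subtlety the paper glosses over, and your argument handles it properly.
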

\begin{proof} Use Lemma
\ref{lem:theta}.
\end{proof}

\begin{lemma}
\label{lem:thetaaut}
The map
$\theta$ from 
Definition
\ref{def:theta}
is an automorphism
of the graph  $\widehat \Phi_n$.
\end{lemma}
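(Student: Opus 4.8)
The plan is to show that $\theta$ preserves the adjacency relation of $\widehat\Phi_n$, and then to use the fact (established in the preceding lemma) that $\theta$ is a bijection of finite order to upgrade this to an automorphism. Recall that, by Definition \ref{def:Pn} together with the definition of the Hasse diagram, two vertices $x,y$ are adjacent in $\widehat\Phi_n$ precisely when one covers the other, that is, when $y-x$ is alternating or $x-y$ is alternating. Since $\theta$ is a bijection of the vertex set, it suffices to prove the one-sided implication: if $y$ covers $x$, then $\theta(x)$ and $\theta(y)$ are adjacent.

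First I would carry out the key computation. Writing $\alpha=y-x=(\alpha_1,\ldots,\alpha_n)$, which is alternating by hypothesis, I would apply the formula for $\theta$ from Lemma \ref{lem:theta} together with the identity $\overline{y_n}-\overline{x_n}=x_n-y_n=-\alpha_n$ to obtain
\[
\theta(y)-\theta(x)=(-\alpha_n,\,\alpha_1,\,\alpha_2,\,\ldots,\,\alpha_{n-1}).
\]
Thus $\theta$ transforms the difference $\alpha$ into a cyclic shift of $\alpha$ in which the wrapped-around final coordinate gets negated.

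The heart of the argument is then a short case analysis on $\alpha_n$, using the pattern of nonzero entries from Definition \ref{def:alt}. If $\alpha_n=0$, the nonzero coordinates of $\theta(y)-\theta(x)$ appear in the same relative order as those of $\alpha$, so $\theta(y)-\theta(x)$ is again alternating and $\theta(y)$ covers $\theta(x)$. If $\alpha_n\neq 0$, then $\alpha_n$ is the final nonzero coordinate of $\alpha$ and hence equals $+1$ by the pattern $1,-1,\ldots,-1,1$; deleting this trailing $1$ leaves nonzero entries reading $1,-1,\ldots,-1$, and prepending $-\alpha_n=-1$ produces the pattern $-1,1,-1,\ldots,-1$, which is not alternating but whose negation is. Hence $\theta(x)-\theta(y)=-(\theta(y)-\theta(x))$ is alternating and $\theta(x)$ covers $\theta(y)$. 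In either case $\theta(x)$ and $\theta(y)$ are adjacent, as required.

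Finally, I would convert this one-sided statement into a full automorphism. Since $\theta$ has order $2n$, the inverse map $\theta^{-1}=\theta^{2n-1}$ is a composition of copies of $\theta$ and therefore also sends adjacent vertices to adjacent vertices; applying this to $\theta^{-1}$ yields the reverse implication, so $x,y$ are adjacent if and only if $\theta(x),\theta(y)$ are adjacent. I expect the case $\alpha_n\neq 0$ to be the only delicate point, precisely because there the cover relation reverses direction: $\theta$ need not preserve the orientation of an edge (equivalently, it need not preserve rank), only the underlying undirected adjacency, and it is exactly this orientation flip that makes the negation step necessary.
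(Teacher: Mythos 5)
Your proposal is correct and follows the same route as the paper, which simply asserts that ``one checks that this permutation respects adjacency'' and leaves the verification to the reader. You supply exactly that check: the computation $\theta(y)-\theta(x)=(-\alpha_n,\alpha_1,\ldots,\alpha_{n-1})$, the case split on $\alpha_n$ (correctly noting that when $\alpha_n\neq 0$ it must equal $1$ and the cover relation reverses direction), and the standard finiteness/inverse argument to upgrade edge-preservation to an automorphism.
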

\begin{proof} By construction $\theta$ permutes the vertices
of $\widehat \Phi_n$. One checks that this permutation
respects adjacency in $\widehat \Phi_n$.
\end{proof}

\begin{definition}
\label{def:tau}
\rm
Define a map $\tau$ on $\Phi_n$ that sends
each vertex $(a_1, a_2, \ldots, a_n)\mapsto 
(a_n, \ldots, a_2, a_1)$.
Note that $\tau$ has order 2.
\end{definition}

\begin{lemma}
\label{lem:tauaut}
The above map $\tau$ is an automorphism
of the graph $\widehat \Phi_n$.
\end{lemma}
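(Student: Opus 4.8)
The plan is to use the adjacency criterion built into Definition~\ref{def:Pn}: two vertices $x,y$ are adjacent in $\widehat\Phi_n$ precisely when one covers the other, which happens exactly when $y-x$ or $x-y$ lies in ${\rm Alt}_n$. Since $\tau$ from Definition~\ref{def:tau} is the coordinate-reversal map, it is an involution and hence a bijection on the vertex set; it also visibly sends $0/1$-sequences to $0/1$-sequences. So it remains only to show that $\tau$ preserves adjacency.

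First I would observe that $\tau$, being a reversal, acts linearly on sequences, so that $\tau(y)-\tau(x)=\tau(y-x)$ for all vertices $x,y$ (coordinatewise, $(\tau(y-x))_i=(y-x)_{n+1-i}=y_{n+1-i}-x_{n+1-i}$). This reduces the whole problem to a single claim: that $\tau$ maps ${\rm Alt}_n$ into itself, i.e. the reversal of an alternating sequence is again alternating.

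To prove that claim, let $(\alpha_1,\ldots,\alpha_n)\in{\rm Alt}_n$ and consider its reversal $(\alpha_n,\ldots,\alpha_1)$. Reversal keeps every entry in $\{1,0,-1\}$, so condition (i) of Definition~\ref{def:alt} is immediate. For condition (ii), the essential point is that the prescribed pattern of nonzero values $1,-1,1,-1,\ldots,-1,1$ has odd length and is palindromic: reading it backwards reproduces it. Hence after reversal the nonzero coordinates still display exactly this pattern, so the reversed sequence is alternating. This palindrome observation is the only genuine content of the argument, and is the step I would isolate as the crux; everything else is formal bookkeeping.

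Finally I would assemble the conclusion. If $y$ covers $x$, then $y-x\in{\rm Alt}_n$, so $\tau(y)-\tau(x)=\tau(y-x)\in{\rm Alt}_n$ by the claim, whence $\tau(y)$ covers $\tau(x)$ and the pair $\tau(x),\tau(y)$ is adjacent. Because $\tau$ is an involution, applying the same reasoning to $\tau(x),\tau(y)$ yields the reverse implication, so $\tau$ carries adjacent pairs to adjacent pairs and non-adjacent pairs to non-adjacent pairs. Therefore $\tau$ is an automorphism of $\widehat\Phi_n$, as claimed.
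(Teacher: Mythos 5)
Your proof is correct and is essentially the routine check that the paper omits (its proof of this lemma is just ``This is routinely checked.''). The key observation---that the pattern $1,-1,1,\ldots,-1,1$ is a palindrome, so reversal preserves ${\rm Alt}_n$ and hence the covering relation---is exactly the right crux, and the reduction via $\tau(y)-\tau(x)=\tau(y-x)$ together with the involution argument for the converse implication is sound.
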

\begin{proof} This is routinely checked.
\end{proof}

\begin{lemma}
\label{lem:tautheta} We have
$\theta \circ \tau = \tau \circ \theta^{-1}$.
\end{lemma}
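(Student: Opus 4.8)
The plan is to verify the identity $\theta \circ \tau = \tau \circ \theta^{-1}$ by a direct computation: I would apply both sides to an arbitrary vertex $(a_1, a_2, \ldots, a_n)$ and check that the resulting sequences agree coordinate by coordinate. The explicit formulas for $\theta$, $\theta^{-1}$, and $\tau$ are already in hand, so this reduces to bookkeeping with reversals, cyclic shifts, and the single complementation $\overline{x} = 1-x$.

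Concretely, I would first compute the left-hand side. Applying $\tau$ to $(a_1, a_2, \ldots, a_n)$ yields $(a_n, a_{n-1}, \ldots, a_1)$ by Definition~\ref{def:tau}. Then applying $\theta$, which by Lemma~\ref{lem:theta} sends $(b_1, b_2, \ldots, b_n) \mapsto (\overline{b_n}, b_1, \ldots, b_{n-1})$, I set $(b_1, \ldots, b_n) = (a_n, a_{n-1}, \ldots, a_1)$, so $b_n = a_1$ and the output is $(\overline{a_1}, a_n, a_{n-1}, \ldots, a_2)$.

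Next I would compute the right-hand side. Applying $\theta^{-1}$ to $(a_1, a_2, \ldots, a_n)$ yields $(a_2, a_3, \ldots, a_n, \overline{a_1})$ by Lemma~\ref{lem:theta}. Then applying $\tau$ reverses this sequence, giving $(\overline{a_1}, a_n, a_{n-1}, \ldots, a_2)$. This matches the left-hand side exactly, so the two maps agree on every vertex and hence are equal.

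There is no genuine obstacle here; the only thing to watch is the indexing of the reversal so that the complemented entry $\overline{a_1}$ lands in the correct (first) coordinate on both sides, and that the remaining entries $a_n, a_{n-1}, \ldots, a_2$ appear in the same order. I would present the argument as two short coordinate computations culminating in the observation that the outputs coincide, invoking Definition~\ref{def:tau} and Lemma~\ref{lem:theta} as the only inputs.
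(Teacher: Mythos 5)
Your proof is correct and follows exactly the route the paper intends: the paper's proof is simply ``Use Definitions \ref{def:theta}, \ref{def:tau},'' i.e.\ the same direct coordinate computation you carry out explicitly. Both sides indeed evaluate to $(\overline{a_1}, a_n, a_{n-1}, \ldots, a_2)$, so nothing is missing.
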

\begin{proof}
Use Definitions
\ref{def:theta},
\ref{def:tau}.
\end{proof}

\begin{theorem}
The above maps $\theta, \tau$ induce an action of $D_{2n}$
on the graph $\widehat \Phi_n$ as a group of automorphisms.
\end{theorem}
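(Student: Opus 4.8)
The plan is to recall the standard presentation of the dihedral group $D_{2n}$ of order $2n$ and to exhibit a surjective homomorphism from $D_{2n}$ onto the subgroup of $\mathrm{Aut}(\widehat \Phi_n)$ generated by $\theta$ and $\tau$, then argue it is in fact an isomorphism onto its image. Concretely, $D_{2n}$ has the presentation
\[
D_{2n} = \langle\, r, s \mid r^{n}=1,\ s^{2}=1,\ s r s^{-1} = r^{-1} \,\rangle .
\]
Since we want a group acting \emph{on} $\widehat \Phi_n$, I would send $r \mapsto \theta$ and $s \mapsto \tau$ and check that the three defining relations are satisfied by the images.

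First I would assemble the facts already proved in the excerpt. Lemma~\ref{lem:thetaaut} and Lemma~\ref{lem:tauaut} show that $\theta$ and $\tau$ lie in $\mathrm{Aut}(\widehat \Phi_n)$, so their composites do too. The relation $s^2=1$ holds because $\tau$ has order $2$ (Definition~\ref{def:tau}). The relation $s r s^{-1}=r^{-1}$ is exactly the content of Lemma~\ref{lem:tautheta}, namely $\theta\circ\tau=\tau\circ\theta^{-1}$, which rearranges to $\tau\circ\theta\circ\tau^{-1}=\theta^{-1}$ using $\tau^{-1}=\tau$. The one relation not yet isolated in a single prior statement is $r^{n}=1$; but the lemma computing $\theta^{n}$ (the complementation map) together with the lemma stating that $\theta$ has order $2n$ gives $\theta^{2n}=1$, not $\theta^{n}=1$. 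This is the crux of the matter and I address it next.

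The main obstacle is precisely reconciling that $\theta$ has order $2n$ rather than $n$: the group generated by $\theta$ and $\tau$ inside $\mathrm{Aut}(\widehat\Phi_n)$ is a dihedral group of order $4n$, not $2n$, if one takes $\theta$ itself as the rotation. The clean fix is to take the rotation to be $r \mapsto \rho$ from Definition~\ref{def:theta}, which literally has order $n$; one then verifies $\rho$ is a graph automorphism (it permutes coordinates cyclically and hence preserves the ``$y-x$ alternating'' covering relation, since cyclic permutation of coordinates carries alternating sequences to alternating sequences), and checks $\tau\circ\rho\circ\tau^{-1}=\rho^{-1}$ directly from the coordinate formulas in Definitions~\ref{def:theta} and~\ref{def:tau}. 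With $r\mapsto\rho$, $s\mapsto\tau$ all three relations of $D_{2n}$ hold, so there is a well-defined homomorphism $D_{2n}\to\mathrm{Aut}(\widehat\Phi_n)$.

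To conclude, I would verify the map is injective, equivalently that its image has order $2n$. Since $\rho$ has order exactly $n$ and $\tau$ has order $2$ with $\tau\notin\langle\rho\rangle$ (for instance $\tau$ fixes a vertex that no nontrivial power of $\rho$ fixes, or one simply exhibits a vertex whose orbit distinguishes them), the powers $\rho^{k}$ and $\tau\rho^{k}$ for $0\le k\le n-1$ give $2n$ distinct automorphisms, matching $|D_{2n}|$. Hence the homomorphism is an isomorphism onto its image, and $D_{2n}$ acts faithfully on $\widehat\Phi_n$ as a group of automorphisms. I would remark that the theorem as phrased speaks of $\theta,\tau$ inducing the action; since $\theta=\xi\circ\rho$ and $\rho$ is recovered from $\theta$ and the already-established automorphisms, presenting the action via $\rho$ and $\tau$ is equivalent and is the presentation that makes the relation $r^{n}=1$ transparent.
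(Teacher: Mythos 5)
There is a genuine gap here, and it originates in a misreading of the notation rather than in the mathematics you set out to patch. In this paper $D_{2n}$ denotes the dihedral group whose rotation subgroup is cyclic of order $2n$ (the symmetry group of a regular $2n$-gon, a group of order $4n$), with presentation $\langle r,s \mid r^{2n}=1,\ s^{2}=1,\ srs^{-1}=r^{-1}\rangle$. Under that reading there is nothing to reconcile: Lemmas \ref{lem:thetaaut} and \ref{lem:tauaut} place $\theta$ and $\tau$ in the automorphism group of $\widehat\Phi_n$, the lemma computing $\theta^{n}$ gives $\theta^{2n}=1$, Definition \ref{def:tau} gives $\tau^{2}=1$, and Lemma \ref{lem:tautheta} is exactly the conjugation relation; sending $r\mapsto\theta$, $s\mapsto\tau$ therefore defines the asserted action, which is precisely the paper's (one-line) proof. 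Your first two paragraphs correctly assemble these ingredients; the ``crux'' you then identify is not a real obstacle.

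The repair you propose does not work, because $\rho$ is \emph{not} an automorphism of $\widehat\Phi_n$. The cyclic shift fails to preserve the covering relation: for $n=4$ take $x=(0,1,0,0)$ and $y=(1,0,0,1)$, so that $y-x=(1,-1,0,1)$ is alternating and $x,y$ are adjacent in $\widehat\Phi_4$; but $\rho(x)=(0,0,1,0)$ and $\rho(y)=(1,1,0,0)$, and neither $(1,1,-1,0)$ nor $(-1,-1,1,0)$ is alternating (by Definition \ref{def:alt} the nonzero coordinates must form the pattern $1,-1,\ldots,-1,1$, beginning and ending with $+1$), so $\rho(x)$ and $\rho(y)$ are not adjacent. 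Your parenthetical claim that cyclic permutation of coordinates carries alternating sequences to alternating sequences is false, and with it the verification that $r\mapsto\rho$, $s\mapsto\tau$ lands in $\mathrm{Aut}(\widehat\Phi_n)$. The closing assertion that the $(\rho,\tau)$ presentation is equivalent to the $(\theta,\tau)$ one is likewise unjustified: $\langle\rho,\tau\rangle$ would have order $2n$ while $\langle\theta,\tau\rangle$ has order $4n$, and $\rho$ cannot be ``recovered'' inside $\mathrm{Aut}(\widehat\Phi_n)$ since it does not lie there. The correct route is the paper's: keep $\theta$ as the rotation of order $2n$ and read $D_{2n}$ as the dihedral group of order $4n$.
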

\begin{proof} By Lemmas
\ref{lem:thetaaut},
\ref{lem:tauaut},
\ref{lem:tautheta}
and the construction.
\end{proof}

\bigskip

\noindent Paul Terwilliger \hfil\break
\noindent Department of Mathematics \hfil\break
\noindent University of Wisconsin \hfil\break
\noindent 480 Lincoln Drive \hfil\break
\noindent Madison, WI 53706-1388 USA \hfil\break
\noindent email: {\tt terwilli@math.wisc.edu }\hfil\break

\end{document}